\newtheorem{thm}{Theorem}[section]
\newtheorem{lem}{Lemma}[section]
\newtheorem{pro}{Proposition}[section]
\newtheorem{cor}{Corollary}[section]
\theoremstyle{abstract}
\theoremstyle{remark}
\newtheorem*{rema}{Remark}
\newtheorem{exa}{\textbf{Example}}
\theoremstyle{definition}
\newcommand{\ran}{\text{\rm{ran}}}
\newcommand{\R}{\mathbb{R}}
\newcommand{\C}{\mathbb{C}}
\makeatletter \@namedef{subjclassname@2010}{
  \textup{2010} Mathematics Subject Classification}
\begin{document}

\title[Operators Similar to Their Adjoint]{Bounded and Unbounded Operators Similar to Their Adjoints}
\author[Dehimi and Mortad]{Souheyb Dehimi and Mohammed Hichem Mortad $^*$}

\address{Department of
Mathematics, University of Oran, B.P. 1524, El Menouar, Oran 31000.
Algeria.\newline {\bf Mailing address}:
\newline Dr Mohammed Hichem Mortad \newline BP 7085
Es-Seddikia\newline Oran
\newline 31013 \newline Algeria}

\email{sohayb20091@gmail.com}

\email{mhmortad@gmail.com, mortad@univ-oran.dz.}

\begin{abstract}
In this paper, we establish results about operators similar to their
adjoints. This is carried out in the setting of bounded and also
unbounded operators on a Hilbert space. Among the results, we prove
that an unbounded closed operator similar to its adjoint, via a
cramped unitary operator, is self-adjoint. The proof of this result
works also as a new proof of the celebrated result by Berberian on
the same problem in the bounded case. Other results on similarity of
hyponormal unbounded operators and their self-adjointness are also
given, generalizing famous results by Sheth and Williams.
\end{abstract}

\subjclass[2010]{Primary 47A62; Secondary 47A05; 47A12; 47B20;
47B25.}

\keywords{Similarity; Bounded and unbounded: Closed, Self-adjoint,
Normal, Hyponormal operators; Unitary Cramped Operators; Numerical
Range.}

\thanks{$*$ Corresponding author.}

\maketitle

\section{Introduction}

First, we notice that while we will be recalling most of the
essential background we will assume the reader is familiar with any
other result or notion which will appear in the present paper. Some
of the standard textbooks on bounded and unbounded operator theory
are \cite{CON}, \cite{GGK}, \cite{halmos-book-1982},
\cite{Martin-Putinar-hyponormal operators book}, \cite{RS1},
\cite{RUD}, \cite{SCHMUDG-book-2012} and \cite{WEI}.

The notions of bounded self-adjoint, normal, hyponormal, unitary and
cramped unitary operators are defined in their usual fashion. So are
the notions of unbounded closed, symmetric, self-adjoint, normal and
hyponormal operators. The spectrum and the approximate spectrum of
an operator are denoted respectively by $\sigma$ and $\sigma_a$. We
shall not recall their definitions here.

The numerical range of a bounded operator $T$ on a $\C$-Hilbert
space $H$, denoted by $W(T)$, is defined as
\[W(T)=\{<Tx,x>:~x\in H,~\|x\|=1\}.\]

If $S$ and $T$ are two unbounded operators with domains $D(S)$ and
$D(T)$ respectively, then $S\subset T$ means that $T$ is
 an extension of $S$, that is,
\[D(S)\subset D(T) \text{ and }  \forall x\in D(S):~Sx=Tx.\]
We also assume throughout this paper that all operators are linear
and defined on a separable complex Hilbert space $H$, and that
unbounded operators have a dense domaine (so that the uniqueness of
the adjoint is guaranteed). They are said to be densely defined.

We define the product $ST$ of two unbounded operators with domains
$D(S)$ and $D(T)$ respectively by:
\[(ST)x=S(Tx), ~\forall x\in D(ST)=\{x\in D(T):~Tx\in D(S)\}.\]

Recall that an unbounded operator $S$, defined on a Hilbert space
$H$, is said to be invertible if there exists an \textit{everywhere
defined} (i.e. on the whole of $H$) bounded operator $T$, which then
will be designated by $S^{-1}$, such that
\[TS\subset ST=I\]
where $I$ is the usual identity operator.

Recall also that if $S$, $T$ and $ST$ are all densely defined, then
we have $T^*S^*\subset (ST)^*$. There are cases where equality holds
in the previous inclusion, namely if $S$ is bounded. The next result
gives another case where the equality does hold

\begin{lem}[\cite{WEI}]\label{(AB)*=B*A*} If $S$ and $T$ are densely defined and $T$ is invertible with
inverse $T^{-1}$ in $B(H)$, then $(ST)^* =T^*S^*$.
\end{lem}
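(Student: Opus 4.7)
The excerpt reminds us that $T^*S^* \subset (ST)^*$ whenever $S$, $T$, and $ST$ are all densely defined, so the real work is to prove the reverse inclusion $(ST)^* \subset T^*S^*$. The structural idea I would exploit is that, because $T^{-1}$ is everywhere defined and bounded with range inside $D(T)$, every $x \in D(S)$ can be written as $x = Tx'$ with $x' := T^{-1}x \in D(ST)$ and $STx' = Sx$. Thus the action of $ST$ on $T^{-1}D(S)$ reproduces the action of $S$ on $D(S)$, which is exactly the mechanism that will let me transfer adjoint information from $ST$ back to $S$.

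Concretely, starting from an arbitrary $y \in D((ST)^*)$, I would test against vectors of the form $x' = T^{-1}x$ with $x \in D(S)$ and compute
\[
\langle Sx, y\rangle = \langle STx', y\rangle = \langle x', (ST)^*y\rangle = \langle T^{-1}x, (ST)^*y\rangle = \langle x, (T^{-1})^*(ST)^*y\rangle,
\]
where the last step is legitimate because $T^{-1} \in B(H)$. This identifies $y$ as an element of $D(S^*)$ with $S^*y = (T^{-1})^*(ST)^*y$. Since $T^{-1}$ is bounded and everywhere defined, a short calculation gives $(T^{-1})^* = (T^*)^{-1}$, so in fact $S^*y = (T^*)^{-1}(ST)^*y$.

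It then remains to verify $S^*y \in D(T^*)$ and $T^*S^*y = (ST)^*y$. Because $(T^*)^{-1}$ is a bounded, everywhere defined inverse of $T^*$, its range is contained in $D(T^*)$ and $T^*(T^*)^{-1} = I$ on $H$; applying $T^*$ on the left of the identity $S^*y = (T^*)^{-1}(ST)^*y$ yields $T^*S^*y = (ST)^*y$. Combined with the general inclusion $T^*S^* \subset (ST)^*$ recalled just before the lemma, this gives equality as unbounded operators, with matching domains.

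The main obstacle I anticipate is the domain bookkeeping rather than any deep analytic point: justifying $(T^{-1})^* = (T^*)^{-1}$, checking that the range of $(T^*)^{-1}$ actually lies in $D(T^*)$, and being careful not to apply $T$ or $T^*$ to vectors before confirming they lie in the correct domain. Each of these points follows from the invertibility hypothesis together with the fact that products involving a bounded factor behave well under taking adjoints, but they must all be handled explicitly to avoid a circular invocation of the very identity the lemma asserts.
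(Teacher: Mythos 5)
Your argument is correct, and it is essentially the standard proof of this fact; the paper itself offers no proof here, simply quoting the result from Weidmann's book, so your write-up fills in exactly the kind of argument the citation stands for. The key steps all check out: for $x\in D(S)$ the vector $x'=T^{-1}x$ lies in $D(ST)$ with $STx'=Sx$ (since $\ran(T^{-1})\subset D(T)$ and $TT^{-1}=I$), the passage $\langle T^{-1}x,(ST)^*y\rangle=\langle x,(T^{-1})^*(ST)^*y\rangle$ is legitimate because $T^{-1}\in B(H)$, and $(T^{-1})^*=(T^*)^{-1}$ follows non-circularly from the bounded-left-factor identity $(T^{-1}T)^*=T^*(T^{-1})^*$ together with the general inclusion applied to $TT^{-1}=I$. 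The only point you leave implicit is that $ST$ is densely defined, which is needed both for $(ST)^*$ to exist and for the inclusion $T^*S^*\subset(ST)^*$ you invoke; this is immediate from your own observation, since $T^{-1}D(S)\subset D(ST)$ and $T^{-1}$ is bounded with dense range $D(T)$, so the image of the dense set $D(S)$ under $T^{-1}$ is dense.
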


The next lemma is also known.

\begin{lem}\label{closed range closed closed oeprator like invertibility inequality}
Let $T$ be a densely defined and closed operator in a Hilbert space
$H$, with domain $D(T)\subset H$. Assume that for some $k>0$,
\[\|Tx\|\geq k\|x\| \text{ for all } x\in D(T).\]
Then $\ran(T)$ is closed.
\end{lem}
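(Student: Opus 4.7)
The plan is to use the standard Cauchy-sequence characterisation of closed sets together with the closedness of the operator. Concretely, I would pick an arbitrary element $y$ in the closure of $\ran(T)$ and exhibit it as $Tx$ for some $x\in D(T)$.

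First, I would choose a sequence $(x_n)\subset D(T)$ such that $Tx_n\to y$ in $H$. By linearity, the hypothesised lower bound yields
\[\|x_n-x_m\|\leq \frac{1}{k}\|Tx_n-Tx_m\|\]
for all $n,m$, so the convergent (hence Cauchy) sequence $(Tx_n)$ forces $(x_n)$ to be Cauchy in $H$. Completeness of $H$ then gives some $x\in H$ with $x_n\to x$.

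Next, I would invoke the closedness of $T$: we have $x_n\to x$ in $H$, $x_n\in D(T)$, and $Tx_n\to y$. Closedness of $T$ (equivalently, the graph of $T$ being closed in $H\oplus H$) immediately yields $x\in D(T)$ and $Tx=y$, so $y\in\ran(T)$. This shows $\ran(T)$ is closed.

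There is no serious obstacle here; the only ingredients are the inequality (to transfer the Cauchy property from $(Tx_n)$ to $(x_n)$) and the definition of closedness (to conclude $y\in\ran(T)$). No appeal to Lemma~\ref{(AB)*=B*A*} or adjoints is needed. The argument does not require $T$ to be bounded, and it uses density of $D(T)$ only implicitly through the standing hypotheses of the paper.
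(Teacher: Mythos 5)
Your argument is correct and is exactly the standard proof: the lower bound makes $(x_n)$ Cauchy whenever $(Tx_n)$ converges, completeness gives a limit $x$, and closedness of $T$ yields $x\in D(T)$ with $Tx=y$. The paper states this lemma as known without supplying a proof, and your reasoning (including the remark that density of $D(T)$ plays no role) is precisely the intended one.
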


Finally, let us recall some other important results for us.

\begin{thm}[\cite{Berb-1962-operators unitarily equivalent to
adjoints}]\label{berberian-cramped-unitary
equivalence-self-adjointness} If $U$ is a cramped unitary element of
$\mathcal{A}$ (where $\mathcal{A}$ is any $B^*$-algebra), and $T$ is
an element of $\mathcal{A}$ such that $UTU^*=T^*$, then $T$ is
self-adjoint.
\end{thm}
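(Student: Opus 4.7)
The plan is to exploit the cramped hypothesis through functional calculus to produce a commutation relation that forces self-adjointness. First, I would derive a symmetric counterpart of the given identity by taking adjoints: from $UTU^*=T^*$, applying the involution gives $UT^*U^*=T$. Substituting the first into the second yields
\[
T \;=\; UT^*U^* \;=\; U(UTU^*)U^* \;=\; U^2\, T\, U^{*2},
\]
so $T$ commutes with $U^2$.

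The next step, which is the crux of the argument, is to upgrade the commutation with $U^2$ to a commutation with $U$ itself. This is where crampedness enters. Since $\sigma(U)$ lies in an open half-circle of $\mathbb{T}$, the map $z\mapsto z^{2}$ is injective on $\sigma(U)$ (two distinct points of the unit circle with the same square are antipodal, which cannot happen in an open semicircle). Therefore it is a homeomorphism from $\sigma(U)$ onto $\sigma(U^{2})=\{z^{2}:z\in\sigma(U)\}$, and its inverse $f:\sigma(U^{2})\to\sigma(U)$ is continuous. By the continuous functional calculus in the $B^{*}$-algebra $\mathcal{A}$, we obtain $U=f(U^{2})$, hence $U$ lies in the $C^{*}$-subalgebra generated by $U^{2}$ and the identity.

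Since $T$ commutes with $U^{2}$, it commutes with every element of $C^{*}(U^{2},1)$, and in particular with $U$. Substituting $TU=UT$ back into the original hypothesis gives
\[
T^{*} \;=\; UTU^{*} \;=\; TUU^{*} \;=\; T,
\]
which is the desired conclusion.

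The only delicate point is the functional-calculus step: one must be careful that crampedness yields not merely injectivity of $z\mapsto z^{2}$ on $\sigma(U)$ but genuine continuity of its inverse, so as to legitimately write $U$ as a continuous function of $U^{2}$. Once this is secured, the rest is bookkeeping. I expect this proof to transfer verbatim to the unbounded setting announced in the abstract, provided one has access to an analogue of Lemma \ref{(AB)*=B*A*} to move $U^{*}$ past $T$ and $T^{*}$ without domain pathologies, and provided the functional calculus for the (bounded) unitary $U$ continues to produce a bounded operator commuting with the possibly unbounded $T$.
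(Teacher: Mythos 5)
Your proof is correct, and it takes a genuinely different route from the paper's. One small point you should make explicit: commuting with $U^2$ gives, a priori, commutation only with the closed (non-$*$) algebra generated by $U^2$ and $1$, since the commutant of a single element need not be $*$-closed; you also need $T$ to commute with $(U^2)^*$ before you may invoke all of $C^*(U^2,1)$. This is immediate here because $(U^2)^*=(U^2)^{-1}$ and commutation with an invertible element passes to its inverse, but the sentence is worth writing. With that supplied, the crux of your argument is sound: crampedness excludes antipodal points of the circle from $\sigma(U)$, so $z\mapsto z^2$ is a continuous injection of the compact set $\sigma(U)$ onto $\sigma(U^2)$, hence a homeomorphism, and the composition rule for the continuous functional calculus gives $U=f(U^2)\in C^*(U^2,1)$; then $TU=UT$ and $T^*=UTU^*=T$.

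For comparison: the paper does not prove the theorem where it is stated (it is quoted from Berberian, whose original argument goes through the Cayley transform); the paper's advertised new proof is the one given for the unbounded generalization (Theorem \ref{Berberian-Dehimi-Mortad-unbounded!}), which the subsequent remark says specializes to the bounded case. That proof shares your first step, $U^2T=TU^2$, but then diverges completely: it derives $TU=UT^*$, proves that $T$ is normal by applying Stochel's asymmetric Putnam--Fuglede theorem to $S=\frac{1}{2}(U+U^*)$, which is positive and invertible precisely because $U$ is cramped (following Williams), and finally cites Mortad's result that a normal operator similar to its adjoint via an operator $S$ with $0\notin\overline{W(S)}$ is self-adjoint. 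Your route is shorter and more self-contained in the bounded setting, and it works verbatim in an arbitrary $B^*$-algebra, exactly as the statement is phrased, without any Hilbert-space machinery (numerical range, Fuglede--Putnam). What the paper's heavier machinery buys is the unbounded case: there, your upgrade from commutation with $U^2$ to commutation with $U$ needs extra care, since limits of bounded operators intertwining a closed unbounded $T$ yield only inclusions of the form $BT\subset TB$, and the domain bookkeeping you defer is precisely where the work lies.
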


\begin{rema}
The previous theorem will be generalized to unbounded operators with
a proof which works in the bounded case too. See Theorem
\ref{Berberian-Dehimi-Mortad-unbounded!} and the remark below it.
\end{rema}

\begin{thm}[\cite{berb-1964-numerical range}]\label{berberian-T invertible cramped}
Let $T$ be a bounded operator for which $0\not\in \overline{W(T)}$.
Then $T$ is invertible and the unitary operator
$T(T^*T)^{-\frac{1}{2}}$ is cramped.
\end{thm}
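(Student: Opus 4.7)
The plan has two steps: show $T$ is invertible, and then show $V:=T(T^*T)^{-1/2}$ is a cramped unitary. For the first, I would exploit that $\overline{W(T)}$ is closed and convex (by Toeplitz-Hausdorff) and misses $0$. A separating hyperplane yields $a\in\C$ with $|a|=1$ and $c>0$ such that $\mathrm{Re}(a\langle Tx,x\rangle)\ge c\|x\|^2$ for every $x \in H$. Cauchy-Schwarz then gives $\|Tx\|\ge c\|x\|$, and the analogous bound for $T^*$ follows since $\mathrm{Re}\langle T^*x,x\rangle=\mathrm{Re}\langle Tx,x\rangle$. Lemma~\ref{closed range closed closed oeprator like invertibility inequality} then yields closed range, and together with trivial kernels this makes $T$ bijective, hence invertible.

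For the cramped part, invertibility of $T$ passes to $|T|:=(T^*T)^{1/2}$, and a direct computation shows $V:=T|T|^{-1}$ is unitary. I would then transfer the information about $\overline{W(T)}$ to $\sigma(V)$ via the approximate point spectrum. Because $V$ is normal, $\sigma(V)=\sigma_a(V)$, so for each $\lambda\in\sigma(V)$ there are unit vectors $x_n$ with $(V-\lambda)x_n\to 0$. Setting $y_n:=|T|^{-1}x_n$ rewrites this as $(T-\lambda|T|)y_n\to 0$; since $|T|$ is invertible, $\|y_n\|$ stays in a compact subinterval of $(0,\infty)$, and renormalizing to unit vectors $z_n:=y_n/\|y_n\|$ preserves $(T-\lambda|T|)z_n\to 0$. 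Pairing with $z_n$, taking real parts, and using $\mathrm{Re}(a\langle Tz_n,z_n\rangle)\ge c$ together with $0<\langle|T|z_n,z_n\rangle\le\|T\|$ yields $\mathrm{Re}(a\lambda)\ge c/\|T\|>0$. Hence $\sigma(V)$ lies in a closed arc of the unit circle of length strictly less than $\pi$, which is the definition of $V$ being cramped.

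The delicate point is the last inequality: I need a \emph{strict} positive lower bound on $\mathrm{Re}(a\lambda)$ rather than mere nonnegativity, since a zero bound would only place $\sigma(V)$ in a closed semicircle instead of a strictly smaller arc. Strictness comes from combining the fixed constant $c$ with the upper bound $\langle|T|z_n,z_n\rangle\le\|T\|$; the invertibility of $|T|$, itself a consequence of the first step, is indispensable both for defining $V$ and for guaranteeing the normalizations $\|y_n\|$ remain bounded away from $0$.
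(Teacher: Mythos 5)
This theorem is quoted from Berberian's 1964 paper and the present article gives no proof of it, so there is nothing internal to compare against; judged on its own, your argument is correct and is essentially the classical one. Strict separation of the compact convex set $\overline{W(T)}$ from $0$ gives $a$ with $|a|=1$ and $c>0$ such that $\mathrm{Re}\,(a\langle Tx,x\rangle)\geq c\|x\|^{2}$, whence $\|Tx\|\geq c\|x\|$ and $\|T^{*}x\|\geq c\|x\|$, and hence invertibility; then, writing $V=T(T^{*}T)^{-1/2}$ and using $\sigma(V)=\sigma_{a}(V)$, the substitution $y_{n}=(T^{*}T)^{-1/2}x_{n}$ with renormalization transfers the spectral data of $V$ back into $W(T)$, and the estimate $\mathrm{Re}\,(a\lambda)\geq c/\|T\|>0$ places $\sigma(V)$ in an arc of angular length $2\arccos(c/\|T\|)<\pi$, hence in an open semicircle, so $V$ is cramped; the normalization step is legitimate because $\|y_{n}\|$ is indeed pinned between $1/\|T\|$ and $\|(T^{*}T)^{-1/2}\|$. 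One cosmetic repair: the lower bound for $T^{*}$ does not follow from ``$\mathrm{Re}\langle T^{*}x,x\rangle=\mathrm{Re}\langle Tx,x\rangle$'' alone, since the separating functional involves the rotation by $a$; argue instead via $|\langle T^{*}x,x\rangle|=|\langle Tx,x\rangle|\geq\mathrm{Re}\,(a\langle Tx,x\rangle)\geq c\|x\|^{2}$, or note $\mathrm{Re}\,(\overline{a}\langle T^{*}x,x\rangle)=\mathrm{Re}\,(a\langle Tx,x\rangle)$. With that adjustment the proof is complete, and, as you observe, the strictness of the final inequality (coming from the fixed $c$ and the bound $\langle (T^{*}T)^{1/2}z_{n},z_{n}\rangle\leq\|T\|$) is exactly what upgrades ``closed semicircle'' to ``cramped.''
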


\begin{thm}[\cite{Sheth-PAMS-hyponormal} or \cite{Wil}]\label{sheth-Williams-bounded-similarity}
Let $T$ be a bounded hyponormal operator. If $S$ is any bounded
operator for which $0\not\in \overline{W(S)}$, then
\[ST=T^*S\Longrightarrow T=T^*.\]
\end{thm}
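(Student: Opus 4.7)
The plan is to use the polar decomposition of $S$ supplied by Theorem~\ref{berberian-T invertible cramped} to bring the intertwining $ST=T^{*}S$ into the exact form required by Berberian's Theorem~\ref{berberian-cramped-unitary equivalence-self-adjointness}, after an appeal to hyponormality to remove the positive part.

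First, because $0\notin\overline{W(S)}$, Theorem~\ref{berberian-T invertible cramped} tells us that $S$ is invertible and that the unitary factor $U:=S(S^{*}S)^{-1/2}$ in the polar decomposition $S=U|S|$ is cramped. Substituting $S=U|S|$ into $ST=T^{*}S$ and isolating the unitary gives
\[
|S|T|S|^{-1}=U^{*}T^{*}U;
\]
taking adjoints also yields $|S|^{-1}T^{*}|S|=U^{*}TU$. The right-hand sides are unitarily equivalent to $T^{*}$ and $T$, so $|S|T|S|^{-1}$ is cohyponormal while being a positive similarity transform of the hyponormal operator $T$.

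The crux of the argument is then to promote these two identities to the commutation $|S|T=T|S|$. Granting this, the first display collapses to $UTU^{*}=T^{*}$ with $U$ cramped unitary, and Berberian's Theorem~\ref{berberian-cramped-unitary equivalence-self-adjointness} delivers $T=T^{*}$. To extract $[|S|,T]=0$, I would multiply the two identities in both orders to cancel the cramped unitary $U$, obtaining operator expressions purely in $T$, $T^{*}$, $|S|^{\pm 1}$; combining with the hyponormal inequality $T^{*}T\ge TT^{*}$ and its conjugated form, and exploiting the positivity and invertibility of $|S|$, one should be able to conclude $[|S|^{2},T]=0$, whence $[|S|,T]=0$ by the Borel functional calculus.

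The commutation step is the main obstacle; the rest is a substitution plus an application of Theorem~\ref{berberian-cramped-unitary equivalence-self-adjointness}. A safer alternative route, if the commutator argument proves delicate, is to rotate $S$ by a unimodular constant $e^{i\theta}$, preserving the equation $ST=T^{*}S$ but placing $\overline{W(e^{i\theta}S)}$ in an open right half-plane, so that $R:=\operatorname{Re}(e^{i\theta}S)$ is positive and invertible. Adding $ST=T^{*}S$ to its adjoint gives $RT=T^{*}R$, whence $\widetilde{T}:=R^{1/2}TR^{-1/2}$ is self-adjoint. Consequently $\sigma(T)=\sigma(\widetilde T)\subset\mathbb{R}$, and a hyponormal operator with real spectrum must be self-adjoint (by Putnam's area inequality it is normal, and a normal operator with real spectrum is self-adjoint by the spectral theorem).
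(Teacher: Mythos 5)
Your primary route stalls exactly where you admit it does: the commutation $[|S|,T]=0$ cannot be extracted from $ST=T^*S$, hyponormality of $T$, and positivity of $|S|$ by the manipulations you describe. The second displayed identity is just the adjoint of the first, so it carries no new information; multiplying them in the two orders only gives $|S|T|S|^{-2}T^*|S|=U^*T^*TU$ and $|S|^{-1}T^*|S|^{2}T|S|^{-1}=U^*TT^*U$, and feeding in $T^*T\geq TT^*$ produces an operator inequality between two differently conjugated expressions, not the equality $[|S|^2,T]=0$. The commutation is true only \emph{a posteriori}, because once $T=T^*$ the relation $ST=T^*S$ forces $S^*ST=TS^*S$ --- so deriving it first is circular. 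Tellingly, the paper's Theorem~\ref{Main Theorem: Bounded Case} \emph{assumes} $S^*ST=TS^*S$ as an extra hypothesis precisely so that the polar-decomposition/Berberian argument you sketch goes through, and in exchange it drops hyponormality; the two hypotheses are traded, not deducible from one another. As written, the first half of your proposal is therefore a gap, not a proof.

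Your fallback route, however, is a complete and correct argument and should be promoted to the main proof. Since $\overline{W(S)}$ is a closed convex set omitting $0$, a rotation places it in a half-plane $\{\operatorname{Re}z\geq\delta\}$ with $\delta>0$, so $R=\tfrac12\bigl(e^{i\theta}S+e^{-i\theta}S^*\bigr)\geq\delta I$ is positive and invertible; the relation $e^{i\theta}ST=T^*e^{i\theta}S$ together with its adjoint gives $RT=T^*R$, hence $R^{1/2}TR^{-1/2}$ is self-adjoint and $\sigma(T)\subset\R$; Putnam's area inequality then makes the hyponormal $T$ normal, and a normal operator with real spectrum is self-adjoint. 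Note that the paper states Theorem~\ref{sheth-Williams-bounded-similarity} by citation only, so there is no internal proof to compare with; your fallback is essentially the classical Sheth--Williams argument, and it follows the same strategy the paper uses for its unbounded generalization, Theorem~\ref{Unbounded hyponormal Main theorem!!}: first force $\sigma(T)\subset\R$ from $0\notin\overline{W(S)}$, then let hyponormality upgrade real spectrum to self-adjointness.
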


\begin{pro}[\cite{Janas-Hyponormal-unbd-III}]\label{Janas-hyponormal-III-PROPOSITION}
Let $T$ be an unbounded, closed and hyponormal operator in some
Hilbert space $H$. Then $W(T)\subset\text{conv}\sigma(T)$, where
conv$\sigma(T)$ denotes the the convex hull of $\sigma(T)$.
\end{pro}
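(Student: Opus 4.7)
The plan is to combine Hahn--Banach separation with a resolvent trick that reduces the problem to the (known) bounded case. Since any closed convex subset of $\C$ is the intersection of the closed half-planes that contain it, $\text{conv}\,\sigma(T)$ is the intersection of all closed half-planes $H\supset\sigma(T)$; so it suffices to prove that $\sigma(T)\subset H\Rightarrow W(T)\subset H$ for every such $H$. The map $T\mapsto e^{i\theta}(T-\alpha)$ preserves the class of closed hyponormal operators and moves $\sigma(T)$ and $W(T)$ by the same affine transformation, so I would normalise to $H=\{z\in\C:\text{Re}\,z\ge 0\}$. The reduced goal then reads: if $\sigma(T)\subset\{\text{Re}\,z\ge 0\}$, then $\text{Re}\langle Tx,x\rangle\ge 0$ for every unit $x\in D(T)$.

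Fix $\varepsilon>0$ and put $S:=T+\varepsilon$, which is closed, hyponormal, and satisfies $\sigma(S)\subset\{\text{Re}\,z\ge\varepsilon\}$. In particular $0\in\rho(S)$, so $S^{-1}\in B(H)$. I would first check that $S^{-1}$ is itself a bounded hyponormal operator. For this, Lemma~\ref{(AB)*=B*A*} applied to $S^{-1}S\subset I$, combined with the fact that $\text{ran}\,S=H$ forces $S^*$ to be injective, yields $(S^{-1})^*=(S^*)^{-1}\in B(H)$. Substituting $x=S^{-1}y$ into the hyponormal inequality $\|S^*x\|\le\|Sx\|$ gives $\|S^*S^{-1}y\|\le\|y\|$ for all $y\in H$, so $S^*S^{-1}$ is a contraction. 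Taking adjoints (identifying $(S^*S^{-1})^*$ with the closure of $(S^*)^{-1}S$) and then substituting $y=Sw$ for $w\in D(S)$ produces $\|(S^{-1})^*Sw\|\le\|w\|$, which is exactly $\|(S^{-1})^*z\|\le\|S^{-1}z\|$ on all of $H$.

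The classical bounded hyponormal version of the statement (proved via $\|A\|=r(A)$ for bounded hyponormal $A$, together with the description of a compact convex set as the intersection of its circumscribed disks) now applies to $S^{-1}$ and gives $\overline{W(S^{-1})}\subset\text{conv}\,\sigma(S^{-1})$. By spectral mapping $\sigma(S^{-1})\setminus\{0\}=\{1/\lambda:\lambda\in\sigma(S)\}$, and the M\"obius map $z\mapsto 1/z$ sends $\{\text{Re}\,z\ge\varepsilon\}$ into the closed disk of centre $1/(2\varepsilon)$ and radius $1/(2\varepsilon)$, which lies in $\{\text{Re}\,z\ge 0\}$. Hence $\text{Re}\langle S^{-1}y,y\rangle\ge 0$ for every unit $y\in H$. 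Substituting $y=Sx/\|Sx\|$ for unit $x\in D(T)$ yields
\[
0\le\text{Re}\langle S^{-1}y,y\rangle=\frac{\text{Re}\langle Tx,x\rangle+\varepsilon}{\|Sx\|^{2}},
\]
so $\text{Re}\langle Tx,x\rangle\ge-\varepsilon$; letting $\varepsilon\downarrow 0$ finishes the argument.

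The main obstacle I anticipate is the transition from the unbounded hyponormality of $S$ to the bounded hyponormality of $S^{-1}$: one must correctly identify $(S^{-1})^*=(S^*)^{-1}$ via Lemma~\ref{(AB)*=B*A*} and the surjectivity of $S$, then transport the domain-restricted inequality $\|S^*x\|\le\|Sx\|$ to an everywhere-valid inequality on $H$, which requires some bookkeeping with adjoint domains. Once $S^{-1}$ is known to be bounded hyponormal, the rest of the argument is convex geometry and an elementary spectral-mapping calculation.
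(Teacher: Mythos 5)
The paper gives no proof of this proposition at all --- it is imported verbatim from Janas's paper --- so there is no internal argument to measure yours against; judged on its own, your proof is correct and self-contained. The two genuinely delicate points are both handled properly: first, the identification $(S^{-1})^*=(S^*)^{-1}$ for $S=T+\varepsilon$, which indeed follows from Lemma \ref{(AB)*=B*A*} applied to $S^{-1}S$ together with $\ker S^*=(\ran S)^{\perp}=\{0\}$; second, the transfer of the domain-restricted inequality $\|S^*x\|\leq\|Sx\|$ to the everywhere-defined inequality $\|(S^{-1})^*z\|\leq\|S^{-1}z\|$, via the contraction $S^*S^{-1}$ and the inclusion $(S^*S^{-1})^*\supset (S^{-1})^*S^{**}=(S^{-1})^*S$. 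This ``bounded inverse of an invertible hyponormal operator is hyponormal'' step is exactly the fact the paper itself borrows from Janas in the proof of Theorem \ref{Unbounded hyponormal Invertible theorem!!}, so your route stays within the toolkit already in use. The remaining ingredients are sound: the bounded inclusion $\overline{W(A)}\subset\text{conv}\,\sigma(A)$ for bounded hyponormal $A$ is classical (translates of hyponormal operators are normaloid, hence the circumscribed-disk argument), the spectral mapping $\sigma(S^{-1})\setminus\{0\}=\{1/\lambda:\lambda\in\sigma(S)\}$ is standard for $0\in\rho(S)$, and the M\"obius computation reduces to $|z-2\varepsilon|\leq|z|\Longleftrightarrow \re z\geq\varepsilon$, with the possible extra point $0\in\sigma(S^{-1})$ (present because $S$ is unbounded) lying harmlessly in the same disk. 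One small caveat: intersecting closed half-planes produces the \emph{closed} convex hull, so what you actually establish is $W(T)\subset\overline{\text{conv}\,\sigma(T)}$; since $\sigma(T)$ may be unbounded, its convex hull need not be closed, so this is formally a touch weaker than the statement as printed --- though it is precisely the form the paper needs, since Claim 2--3 of Theorem \ref{Unbounded hyponormal Main theorem!!} only uses $\sigma(T)\subset\R\Longrightarrow W(T)\subset\R$. In exchange, your argument is elementary and bypasses Janas's machinery entirely, at the cost of invoking the bounded hyponormal inclusion as a known black box.
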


\begin{pro}[\cite{OtaSchm}]\label{Ota-schmud-prosoition-quasi-similar-symmetric-self-adj}
Let $T$ be a densely defined, closed and symmetric operator in a
Hilbert space. If $T$ is quasi-similar to its adjoint $T^*$, then
$T$ is self-adjoint (for the definition of quasi-invertibility, the
reader may look at \cite{OtaSchm}).
\end{pro}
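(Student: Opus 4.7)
The plan is to reduce self-adjointness of $T$ to the triviality of the deficiency subspaces $\ker(T^*\mp iI)$, and then kill those subspaces using only one half of the quasi-similarity. Recall that a closed symmetric operator $T$ is self-adjoint if and only if $\ker(T^*-iI)=\ker(T^*+iI)=\{0\}$, so it suffices to verify these two equalities.

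Unpacking the hypothesis, quasi-similarity should supply bounded, injective, dense-range operators $X$ and $Y$ with $XT\subset T^*X$ and $YT^*\subset TY$. Pick $u\in\ker(T^*-iI)$, i.e.\ $u\in D(T^*)$ with $T^*u=iu$. The second intertwining gives $Yu\in D(T)$ and
\[
TYu \;=\; YT^*u \;=\; iYu,
\]
so $Yu$ is an eigenvector of $T$ at the eigenvalue $i$. But $T$ is symmetric, so $\langle TYu,Yu\rangle\in\R$; on the other hand this inner product equals $i\|Yu\|^2$. The only way both can hold is $Yu=0$, and then injectivity of $Y$ forces $u=0$. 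Repeating the argument with $-i$ in place of $i$ kills $\ker(T^*+iI)$. Hence both deficiency indices of $T$ vanish and $T=T^*$. Observe that only the intertwining $YT^*\subset TY$ was used; the map $X$ plays no role.

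The main obstacle I foresee is purely definitional: one must check that the version of quasi-similarity recorded in \cite{OtaSchm} truly delivers the inclusion $YT^*\subset TY$ in the strong sense that $Yu\in D(T)$ and $TYu=YT^*u$ for every $u\in D(T^*)$, rather than merely on some smaller core. Once that is granted, the argument reduces to the elementary observation that a symmetric operator cannot have a nonreal eigenvalue.

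As a sanity check, one could replace the eigenvector step by a spectral argument: for a closed symmetric $T$ the approximate point spectrum $\sigma_a(T)$ lies in $\R$, and quasi-affinities intertwine approximate eigenvalues, so $\pm i\notin\sigma_a(T^*)$, which again forces the deficiency subspaces to be trivial. The direct eigenvector approach above is however shorter and avoids having to invoke the (somewhat delicate) behavior of approximate spectra under unbounded quasi-similarity.
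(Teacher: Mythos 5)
The paper does not actually prove this proposition: it is quoted verbatim from \^{O}ta--Schm\"{u}dgen and used as a black box in Claim 3 of Theorem \ref{Unbounded hyponormal Main theorem!!}, so there is no in-paper proof to compare against. Judged on its own, your argument is correct under the standard reading of quasi-similarity, namely that there exist \emph{bounded}, everywhere defined quasi-affinities (injective with dense range) $X,Y$ with $XT\subset T^*X$ and $YT^*\subset TY$: then for $u\in\ker(T^*-iI)$ one indeed gets $Yu\in D(T)$, $TYu=iYu$, and symmetry of $T$ forces $i\|Yu\|^2\in\R$, hence $Yu=0$ and $u=0$ by injectivity; likewise for $-i$, and von Neumann's criterion (closedness plus vanishing deficiency indices) gives $T=T^*$. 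This reading is exactly the one the paper relies on when it invokes the proposition with the bounded intertwiners $S$ and $I$, so your proof fully covers the use made of it here; it even proves slightly more, since only one intertwiner and only its injectivity (not its dense range, nor $X$ at all) are used. Two caveats: first, the definitional worry you raise is the real one --- if \^{O}ta--Schm\"{u}dgen's notion of quasi-invertibility admits unbounded (closed, injective, dense-range) intertwiners, then $u\in D(Y)$ is not automatic for $u\in\ker(T^*\mp iI)$ and your argument would need supplementing in that generality. Second, your ``sanity check'' via approximate spectra is shakier than the main argument: a quasi-affinity $Y$ need not transfer approximate eigenvectors, because $\|Yx_n\|$ may tend to $0$ even when $\|x_n\|=1$; genuine eigenvectors avoid this precisely because $Yu=0$ forces $u=0$. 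Since that aside is not needed, the main proof stands.
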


The notion of similarity of operators is important from matrices on
finite-dimensional vector spaces to unbounded operators on a Hilbert
space. Many authors have worked on this type of problems for bounded
operators. We refer the interested reader to \cite{BeckPut},
\cite{Berb-1962-operators unitarily equivalent to adjoints},
\cite{berb-1964-numerical range}, \cite{DePrima-correcting-Singh},
\cite{EMB}, \cite{Jeon et al-2008-IEOT},
\cite{Sheth-PAMS-hyponormal}, \cite{Singh-Mangla-operators-inverses
similar to their adjoint} and \cite{Wil}.

There have been some works on similar unbounded operators but only a
few compared to those in the bounded case. This is due probably to
the complexity of the domains involved. Some of those papers are
\cite{Mortad-Tsukuba-2010}, \cite{MHM.BBMSSS}, \cite{OtaSchm} and
\cite{STO}.

In the present article, we establish some new results on similarity
in the setting of bounded and unbounded operators on a Hilbert
space. We have two main sections, one dedicated to bounded operators
and the other is devoted to unbounded operators.

\section{Main Results: The Bounded Case}

The main result of this section is the following. It permits us to
drop the hypothesis of hyponormality in Theorem
\ref{sheth-Williams-bounded-similarity} at the cost of imposing a
commutativity-like assumption.

\begin{thm}\label{Main Theorem: Bounded Case}
Let $S,T$ be two bounded operators satisfying: $S^{-1}T^*S=T$,
$S^*ST=TS^*S$ and $0\not\in \overline{W(S)}$. Then $T$ is
self-adjoint.
\end{thm}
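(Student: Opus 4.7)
The plan is to combine the two hypotheses on $S$ with its polar decomposition, reducing the problem to Berberian's Theorem \ref{berberian-cramped-unitary equivalence-self-adjointness} on cramped unitary similarity.

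First, I would invoke Theorem \ref{berberian-T invertible cramped}: since $0\notin\overline{W(S)}$, the operator $S$ is invertible and the unitary $U:=S(S^*S)^{-1/2}$ is cramped. In particular, setting $|S|:=(S^*S)^{1/2}$, one has the polar decomposition $S=U|S|$ with $|S|$ invertible (its inverse is a continuous function of $S^*S$, since $0$ is not in the spectrum of $S^*S$).

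Next, I would upgrade the commutation $S^*ST=TS^*S$ to $|S|T=T|S|$. This is a standard functional-calculus step: if the bounded operator $T$ commutes with the bounded positive operator $S^*S$, then $T$ commutes with every continuous function of $S^*S$, in particular with its (positive) square root $|S|$. The hypothesis $S^{-1}T^*S=T$ then rewrites as $T^*S=ST$, i.e.\ $T^*U|S|=U|S|T=UT|S|$. Cancelling the invertible operator $|S|$ on the right yields $T^*U=UT$, which is the same as
\[
UTU^*=T^*.
\]
Since $U$ is cramped unitary, Theorem \ref{berberian-cramped-unitary equivalence-self-adjointness} immediately gives $T=T^*$, as desired.

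The argument is mostly bookkeeping; the only place that requires any care is the passage from $T$ commuting with $S^*S$ to $T$ commuting with $|S|$, and the verification that $|S|$ is actually invertible so it can be cancelled. Both rely on the fact that $S$, hence $S^*S$, is invertible, which is supplied by Theorem \ref{berberian-T invertible cramped}. Once these are in place, the rest is a one-line reduction to Berberian's classical bounded cramped-unitary result.
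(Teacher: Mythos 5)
Your proposal is correct and follows essentially the same route as the paper: polar decomposition $S=U|S|$ with $U$ cramped via Theorem \ref{berberian-T invertible cramped}, upgrading $TS^*S=S^*ST$ to $T|S|=|S|T$, cancelling the invertible $|S|$ to obtain $UTU^*=T^*$, and concluding with Theorem \ref{berberian-cramped-unitary equivalence-self-adjointness}. The only cosmetic difference is that you multiply through by $S$ and cancel $|S|$ on the right, whereas the paper conjugates and cancels $PP^{-1}$; the substance is identical.
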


\begin{proof}
Since $0\not\in \overline{W(S)}$, $S$ is invertible. So, let $S=UP$
be its polar decomposition. Remember that $P=(S^*S)^{\frac{1}{2}}$
is positive and $U=S(S^*S)^{-\frac{1}{2}}$ is unitary. By Theorem
\ref{berberian-T invertible cramped}, $U$ is even cramped.

Since $S^*ST=TS^*S$, we have
\[ P^2T=TP^2\text{ or } PT=TP.\]
Hence we may write
\begin{align*}
&S^{-1}T^*S=T\\
\Longleftrightarrow& P^{-1}U^*T^*UP=T \\
\Longleftrightarrow& U^*T^*U=PTP^{-1}\\
\Longleftrightarrow& U^*T^*U=TPP^{-1}\\
\Longleftrightarrow& U^*T^*U=T\\
\Longleftrightarrow& T^*=UTU^*.\\
\end{align*}
As $U$ is cramped, Theorem \ref{berberian-cramped-unitary
equivalence-self-adjointness} applies and yields the
self-adjointness of $T$, establishing the result.
\end{proof}

Before giving another result on similarity, it appears to be
convenient to recall the following result here:

\begin{thm}[Singh-Mangla, \cite{Singh-Mangla-operators-inverses similar to their adjoint}]\label{similarity-unitarity-Singh-Mangla-1973-PAMS-WRONG}
If $T$ is an invertible normaloid operator such that
$T^*=S^{-1}T^{-1}S$, where $0\not\in \overline{W(S)}$, then $T$ is
unitary.
\end{thm}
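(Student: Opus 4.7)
The plan is to follow the Berberian-style strategy from the proof of Theorem \ref{Main Theorem: Bounded Case}, replacing the self-adjointness conclusion by one that extracts enough spectral information from the normaloid hypothesis to force $T$ to be a surjective isometry.

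First, since $0\notin\overline{W(S)}$, Theorem \ref{berberian-T invertible cramped} yields that $S$ is invertible with polar decomposition $S=UP$, where $P=(S^*S)^{1/2}$ is positive invertible and the unitary factor $U=S(S^*S)^{-1/2}$ is cramped. This sets up the same algebraic stage as in the preceding theorem.

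Second, the hypothesis $T^*=S^{-1}T^{-1}S$ rewrites as $T^{-1}=ST^*S^{-1}$, so $T^{-1}$ and $T^*$ are similar. Taking spectra, $\sigma(T^{-1})=\sigma(T^*)=\overline{\sigma(T)}$; passing to moduli shows that $|\sigma(T)|$ is symmetric under $t\mapsto 1/t$. Combined with the normaloid equality $\|T\|=r(T)$ this already forces $\|T\|\ge 1$.

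Third, iterating the similarity gives $(T^{-1})^n=ST^{*n}S^{-1}$, whence Gelfand's formula produces $r(T^{-1})=r(T^*)=r(T)=\|T\|$. If one can promote this spectral equality to the operator equality $\|T^{-1}\|=\|T\|$ (i.e., $T^{-1}$ is also normaloid) and, via the cramped character of $U$, force $\|T\|=1$, then both $\|Tx\|\le\|x\|$ and $\|T^{-1}x\|\le\|x\|$ hold for every $x$; applying the second to $y=Tx$ yields $\|x\|=\|T^{-1}Tx\|\le\|Tx\|$, so $T$ is an isometry, and its invertibility promotes it to a unitary.

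The main obstacle is precisely the passage from the spectral equality $r(T^{-1})=\|T\|$ to the norm equality $\|T^{-1}\|=\|T\|=1$. The raw similarity only furnishes the weak bound $\|T^{-1}\|\le\|S\|\,\|S^{-1}\|\,\|T\|$; the cramped hypothesis on $U$ must be squeezed more delicately here, perhaps by rewriting $U^*T^{-1}U=PT^*P^{-1}$ and invoking a numerical-range argument in the spirit of Theorem \ref{berberian-cramped-unitary equivalence-self-adjointness}. This is the technical heart of the proof, and the step I would scrutinize most carefully, since without an additional commutation or spectral hypothesis it is not obvious to me that the bound actually collapses; indeed the existence in the literature (e.g. DePrima's note) of corrections to Singh-Mangla-type statements suggests that the conclusion may be sensitive to exactly how much is assumed.
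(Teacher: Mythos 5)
The step you flag in your final paragraph is not merely the technical heart of the argument --- it is an unfillable gap, because the statement as attributed to Singh--Mangla is in fact false. The paper quotes Theorem \ref{similarity-unitarity-Singh-Mangla-1973-PAMS-WRONG} without proof precisely in order to point out (immediately afterwards) that DePrima \cite{DePrima-correcting-Singh} produced a counterexample, and then to record DePrima's corrected version, Theorem \ref{similarity-unitarity-Deprima}, which adds exactly the hypothesis your argument needs: that $T^{-1}$ (as well as $T$) is normaloid or convexoid. So your instinct was right, and no amount of squeezing the cramped unitary factor $U$ will let you pass from the spectral equality $r(T^{-1})=r(T^{*})=r(T)=\|T\|$, which similarity does give you, to the norm equality $\|T^{-1}\|=\|T\|=1$; similarity preserves spectra but not norms, and normaloidness of $T$ says nothing about $T^{-1}$.

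Your intermediate computations up to that point are sound: $\sigma(T^{-1})=\sigma(T^{*})=\overline{\sigma(T)}$ makes the set of spectral moduli invariant under $t\mapsto 1/t$, hence $r(T)\geq 1$ and, since $T$ is normaloid, $\|T\|\geq 1$; and if one \emph{assumes} $T^{-1}$ normaloid one is essentially in the setting of Theorem \ref{similarity-unitarity-Deprima}. But as a proof of the stated theorem the argument cannot be completed, and the correct response to this statement is to recognize it as erroneous (the paper's own label even says so) rather than to attempt a proof.
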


Shortly afterwards, DePrima \cite{DePrima-correcting-Singh} found
out that Theorem
\ref{similarity-unitarity-Singh-Mangla-1973-PAMS-WRONG} was actually
false by giving a counterexample! Then DePrima
\cite{DePrima-correcting-Singh} gave some extra conditions for the
conclusion of Theorem
\ref{similarity-unitarity-Singh-Mangla-1973-PAMS-WRONG} to hold. One
of the results there is the following:

\begin{thm}[DePrima, \cite{DePrima-correcting-Singh}]\label{similarity-unitarity-Deprima}
Let $T$ be an invertible normaloid or convexoid operator such that
$T^{-1}$ too is normaloid or convexoid. If $T^*=S^{-1}T^{-1}S$,
where $0\not\in \overline{W(S)}$, then $T$ is unitary.
\end{thm}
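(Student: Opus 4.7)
My approach is to recast the hypothesis $T^* = S^{-1}T^{-1}S$ as a Lyapunov-type identity, iterate it to produce two-sided power-boundedness of $T$, and then use normaloidity (or convexoidity) to push the resulting spectral information down to the conclusion that $T$ is an isometry.

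The key algebraic move is to rewrite $T^* = S^{-1}T^{-1}S$ in the equivalent form $TST^* = S$. Since $\overline{W(S)}$ is compact and convex and does not contain the origin, a hyperplane-separation argument produces $\theta \in \R$ and $\delta > 0$ with $\re\langle e^{i\theta}Sx,x\rangle \geq \delta\|x\|^2$ for every $x \in H$. The substitution $S \mapsto e^{i\theta}S$ preserves both the similarity relation $T^* = S^{-1}T^{-1}S$ and the hypothesis $0 \notin \overline{W(S)}$, so I may assume at the outset that $\re\langle Sx,x\rangle \geq \delta\|x\|^2$ for all $x$.

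A straightforward induction on $|n|$ (using the invertibility of $T$ to convert $TST^*=S$ into $T^{-1}S(T^*)^{-1}=S$) then yields $S = T^n S (T^*)^n$ for every $n \in \Z$. Testing on a unit vector $x$ gives
\[
\delta\,\|(T^*)^n x\|^2 \leq \re\langle S(T^*)^n x,(T^*)^n x\rangle = \re\langle Sx,x\rangle \leq \|S\|,
\]
so $\|(T^*)^n\|^2 \leq \|S\|/\delta$ uniformly in $n \in \Z$. Taking adjoints, $\sup_{n \in \Z}\|T^n\| < \infty$, and the spectral radius formula delivers $r(T) \leq 1$ and $r(T^{-1}) \leq 1$; these two inequalities together force $\sigma(T) \subset \{\lambda \in \C : |\lambda|=1\}$.

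In the normaloid case, $\|T\| = r(T) \leq 1$ and $\|T^{-1}\| = r(T^{-1}) \leq 1$; combined with $1 = \|I\| \leq \|T\|\|T^{-1}\|$ this forces $\|T\| = \|T^{-1}\| = 1$, after which the chain $\|x\| = \|T^{-1}Tx\| \leq \|Tx\| \leq \|x\|$ shows that $T$ is an isometry and hence, being invertible, unitary. The convexoid case is handled analogously, the identifications $\overline{W(T)} = \mathrm{conv}\,\sigma(T)$ and $\overline{W(T^{-1})} = \mathrm{conv}\,\sigma(T^{-1})$ providing the required norm control once $\sigma(T)$ is known to lie on the unit circle. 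The main obstacle is the iteration step: recognizing that the algebraic identity $T^* = S^{-1}T^{-1}S$ is really a Lyapunov equation whose iterates, driven by the positivity of $\re S$ guaranteed by $0 \notin \overline{W(S)}$, completely pin down the orbit of $T$. Once that is seen, the normaloid/convexoid hypothesis serves exactly as the bridge from a spectral-radius bound to an operator-norm bound.
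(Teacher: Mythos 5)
A preliminary remark: the paper does not prove this statement at all --- it is quoted verbatim from DePrima's article --- so your argument can only be judged on its own terms, not against an in-paper proof. On those terms, the skeleton is sound up to a point. The rewriting of $T^*=S^{-1}T^{-1}S$ as $TST^*=S$, the rotation of $S$ so that $\re\langle Sx,x\rangle\geq\delta\|x\|^2$ (legitimate, since $\overline{W(S)}$ is compact, convex and misses $0$, and the relation is scale-invariant in $S$), the iteration $T^nS(T^*)^n=S$ for all $n\in\Z$, the resulting two-sided bound $\sup_{n\in\Z}\|T^n\|<\infty$, and the conclusion $\sigma(T)\subset\{\lambda:|\lambda|=1\}$ are all correct. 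The normaloid--normaloid case is then genuinely finished: $\|T\|=r(T)=1$, $\|T^{-1}\|=r(T^{-1})=1$, and the chain $\|x\|=\|T^{-1}Tx\|\leq\|Tx\|\leq\|x\|$ shows $T$ is an invertible isometry, hence unitary.

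The gap is the sentence disposing of the convexoid case ``analogously.'' Convexoidity converts the spectral information into numerical-range information, not norm information: from $\overline{W(T)}=\mathrm{conv}\,\sigma(T)$ and $\sigma(T)$ on the unit circle you obtain only that the numerical radius satisfies $w(T)\leq 1$, which gives $\|T\|\leq 2$ and nothing better; the concluding inequality chain requires $\|T\|\leq 1$ and $\|T^{-1}\|\leq 1$, which convexoidity does not provide. Passing from $w(T)\leq 1$, $w(T^{-1})\leq 1$ and $\sigma(T)\subset\partial\mathbb{D}$ (or from the identity $TST^*=S$ itself) to unitarity is a real theorem needing its own proof --- for instance, in finite dimensions one argues that a unimodular eigenvalue is a focus of the elliptical numerical range of every $2\times 2$ compression built from its eigenvector, and such an ellipse leaves the unit disc unless the off-diagonal entry vanishes, so the eigenvector reduces $T$; in infinite dimensions the spectrum may contain no eigenvalues and an approximate-eigenvector or dilation-type substitute is required. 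None of this is supplied, and it is exactly where the content of the convexoid hypothesis lies (a bilateral weighted shift with a single weight $a>1$ shows how delicately $w(T)\leq1$ fails for non-unitary operators satisfying all your power-boundedness conclusions). Note also that the statement permits mixed hypotheses --- $T$ normaloid while $T^{-1}$ is only convexoid, and vice versa --- which your plan does not address and which run into the same missing step. So the proposal proves the normaloid case but does not prove the theorem as stated.
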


Our result in this spirit is the following:

\begin{thm}
Let $S$ and $T$ be two bounded operators such that $TS^*S=S^*ST$ and
$S^{-1}T^*S=T^{-1}$, where $0\not\in \overline{W(S)}$ and where also
$T$ is invertible, then $T$ is unitary.
\end{thm}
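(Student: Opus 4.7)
The plan is to follow the proof of Theorem \ref{Main Theorem: Bounded Case} to reduce the hypotheses to a single identity of the form $T^*=UT^{-1}U^*$ with $U$ a cramped unitary, and then to apply Theorem \ref{berberian-cramped-unitary equivalence-self-adjointness} twice: once to a ``symmetric'' combination of $T$ and $T^{-1}$ and once to an ``antisymmetric'' one.

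First, since $0\notin\overline{W(S)}$, Theorem \ref{berberian-T invertible cramped} guarantees that $S$ is invertible and that the unitary factor $U=S(S^*S)^{-\frac{1}{2}}$ in the polar decomposition $S=UP$, with $P=(S^*S)^{\frac{1}{2}}$, is cramped. The hypothesis $TS^*S=S^*ST$ yields $TP^2=P^2T$, hence $TP=PT$, and so $T^{-1}P=PT^{-1}$ as well (as $T$ is invertible). Substituting $S=UP$ into $S^{-1}T^*S=T^{-1}$ and using these commutations, exactly as in the proof of Theorem \ref{Main Theorem: Bounded Case}, I would arrive at
\[T^*=UT^{-1}U^*,\]
whose inverse reads $(T^*)^{-1}=UTU^*$.

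The key new step is to add and subtract these two identities. Setting $M:=T+T^{-1}$ and $N:=T-T^{-1}$, a direct computation gives
\[M^*=T^*+(T^*)^{-1}=UMU^*\qquad\text{and}\qquad N^*=T^*-(T^*)^{-1}=-UNU^*.\]
Since $U$ is cramped, Theorem \ref{berberian-cramped-unitary equivalence-self-adjointness} applied to $M$ gives $M^*=M$. For $N$ I would pass to $iN$, which satisfies $(iN)^*=-iN^*=iUNU^*=U(iN)U^*$, so the same theorem forces $iN$ to be self-adjoint, i.e.\ $N^*=-N$. Taking the adjoint of $2T=M+N$ and using $M^*=M$ together with $N^*=-N$ then produces
\[2T^*=M^*+N^*=M-N=2T^{-1},\]
so $T^*=T^{-1}$ and $T$ is unitary.

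I do not foresee any serious obstacle beyond spotting the $M,N$ decomposition; this is the device that converts the single similarity relation $T^*=UT^{-1}U^*$ into two genuine Berberian-type identities, and once both are in place the conclusion $T^*=T^{-1}$ drops out by simple addition.
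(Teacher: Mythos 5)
Your proposal is correct. The reduction is exactly the paper's: polar decomposition $S=UP$, crampedness of $U$ from Theorem \ref{berberian-T invertible cramped}, the commutation $PT=TP$ (hence $PT^{-1}=T^{-1}P$) from $TS^*S=S^*ST$, and then the chain of equivalences yielding $T^*=UT^{-1}U^*$. Where you diverge is the endgame: the paper at this point simply cites Theorem 2 of Singh--Mangla (\cite{Singh-Mangla-operators-inverses similar to their adjoint}), which says precisely that a cramped unitary equivalence $T^*=UT^{-1}U^*$ forces $T^*=T^{-1}$, whereas you reprove that step from Berberian's Theorem \ref{berberian-cramped-unitary equivalence-self-adjointness} via the decomposition $M=T+T^{-1}$, $N=T-T^{-1}$, checking $M^*=UMU^*$ and $(iN)^*=U(iN)U^*$, so that $M$ is self-adjoint, $N$ is skew-adjoint, and $2T^*=M-N=2T^{-1}$. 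All the individual steps check out ($(T^*)^{-1}=UTU^*$ by inverting the similarity, and $(T^{-1})^*=(T^*)^{-1}$ for invertible bounded $T$). The trade-off: the paper's route is shorter but leans on an external result from the Singh--Mangla paper (whose Theorem 1, as the paper itself recalls, turned out to be false, so one must trust their Theorem 2 specifically), while your route is self-contained within results already quoted in the paper and in effect supplies a proof of Singh--Mangla's Theorem 2 as a by-product; it is also pleasantly parallel to the paper's own strategy in Theorem \ref{Main Theorem: Bounded Case}, using Berberian's theorem twice instead of once.
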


\begin{proof}
Let $S=UP$ where $U$ is unitary and $P$ is positive (where
$P=(S^*S)^{\frac{1}{2}}$). We then have
\[TS^*S=S^*ST\Longrightarrow S^*ST^{-1}=T^{-1}S^*S,\]
hence
\[P^2T^{-1}=T^{-1}P^2 \text{ so that } PT^{-1}=T^{-1}P.\]
Therefore,
\begin{align*}
&S^{-1}T^*S=T^{-1}\\
\Longleftrightarrow  &P^{-1}U^*T^*UP=T^{-1}\\
\Longleftrightarrow &U^*T^*U=PT^{-1}P^{-1}\\
\Longleftrightarrow &U^*T^*U=T^{-1}PP^{-1}\\
\Longleftrightarrow &U^*T^*U=T^{-1}\\
\Longleftrightarrow &T^*=UT^{-1}U^*.
\end{align*}
Since $0\not\in \overline{W(S)}$, $U$ is cramped so that Theorem 2
of \cite{Singh-Mangla-operators-inverses similar to their adjoint}
applies and gives us $T^*=T^{-1}$, completing the proof.
\end{proof}

\section{Main Results: The Unbounded Case}

The first result of the this section is the following (it
generalizes Theorem \ref{sheth-Williams-bounded-similarity} to an
unbounded operator setting).

\begin{thm}\label{Unbounded hyponormal Main theorem!!}
Let $S$ be a bounded operator on a $\C$-Hilbert space $H$ such that
$0\not\in \overline{W(S)}$. Let $T$ be an unbounded and closed
hyponormal operator with domain $D(T)\subset H$. If $ST^*\subset
TS$, then $T$ is self-adjoint.
\end{thm}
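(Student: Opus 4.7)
The plan is to show $\sigma(T)\subset\mathbb{R}$ and then invoke Proposition \ref{Janas-hyponormal-III-PROPOSITION} together with the classical criterion that a closed densely defined symmetric operator with real spectrum is self-adjoint.

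The heart of the proof is to verify $\sigma_a(T)\subset\mathbb{R}$. Given $\lambda\in\sigma_a(T)$ with unit approximate eigenvectors $x_n\in D(T)$, translation-invariance of hyponormality (valid in the unbounded setting because the cross-terms cancel via $\langle T^*x,x\rangle=\overline{\langle Tx,x\rangle}$ on $D(T)\subset D(T^*)$) yields $\|(T^*-\bar{\lambda})x_n\|\le\|(T-\lambda)x_n\|\to 0$. Applying $S$ and using the hypothesis $ST^*\subset TS$ at $x_n\in D(T^*)$ gives $(T-\bar{\lambda})Sx_n=S(T^*-\bar{\lambda})x_n\to 0$. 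Since $0\notin\overline{W(S)}$, there exists $\delta>0$ with $\|Sx_n\|\ge|\langle Sx_n,x_n\rangle|\ge\delta$, so $y_n:=Sx_n/\|Sx_n\|$ is a unit vector in $D(T)$ with $\|(T-\bar{\lambda})y_n\|\to 0$. If $\lambda\notin\mathbb{R}$, the standard asymptotic-orthogonality argument---expand
\[
\langle(T-\lambda)x_n,y_n\rangle=(\bar{\lambda}-\lambda)\langle x_n,y_n\rangle+\langle x_n,(T^*-\lambda)y_n\rangle
\]
and use that hyponormality (applied to the translate $T-\bar{\lambda}$ acting on the $y_n$) forces $\|(T^*-\lambda)y_n\|\to 0$---yields $\langle x_n,y_n\rangle\to 0$. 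But $|\langle x_n,y_n\rangle|=|\langle Sx_n,x_n\rangle|/\|Sx_n\|\ge\delta/\|S\|>0$, a contradiction. Hence $\lambda\in\mathbb{R}$.

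To promote this to $\sigma(T)\subset\mathbb{R}$, take $\lambda\in\sigma(T)\setminus\sigma_a(T)$; closedness of $T$ forces $T-\lambda$ to be bounded below but not surjective, so $\ker(T^*-\bar{\lambda})\ne\{0\}$. For any nonzero $x$ in this kernel, the hypothesis at $x\in D(T^*)$ gives $T(Sx)=ST^*x=\bar{\lambda}Sx$ with $Sx\ne 0$ (since $S$ is invertible), hence $\bar{\lambda}\in\sigma_p(T)\subset\sigma_a(T)\subset\mathbb{R}$ and thus $\lambda\in\mathbb{R}$. Finally, Proposition \ref{Janas-hyponormal-III-PROPOSITION} yields $W(T)\subset\operatorname{conv}\sigma(T)\subset\mathbb{R}$, so $\langle Tx,x\rangle\in\mathbb{R}$ for every $x\in D(T)$; polarization makes $T$ symmetric, and a closed symmetric operator with real spectrum is self-adjoint (since $T\pm i$ are then bijective). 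The most delicate step I anticipate is the asymptotic-orthogonality computation: it depends on the legitimacy of $\langle Tu_n,v_n\rangle=\langle u_n,T^*v_n\rangle$ and on the translation-invariance of hyponormality, both of which require careful use of the inclusion $D(T)\subset D(T^*)$ inherent to unbounded hyponormality.
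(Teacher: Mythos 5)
Your proof is correct, and while it shares the paper's overall skeleton (prove $\sigma(T)\subset\R$, then use Proposition \ref{Janas-hyponormal-III-PROPOSITION} to get $W(T)\subset\R$ and hence symmetry), it differs genuinely in two of the three steps. The paper first proves that $\sigma_a(T^*)=\sigma(T^*)$ (hyponormality gives $\|(T-\overline{\lambda})x\|\geq\|(T^*-\lambda)x\|$, so a point off the approximate spectrum yields injectivity of $T-\overline{\lambda}$, hence dense and closed range of $T^*-\lambda$ by Lemma \ref{closed range closed closed oeprator like invertibility inequality}), and then runs a single computation with approximate eigenvectors of $T^*$: the intertwining relation $ST^*\subset TS$ makes $(\overline{\lambda}-\lambda)\langle Sx_n,x_n\rangle\to 0$, and $0\notin\overline{W(S)}$ forces $\lambda\in\R$. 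You instead establish $\sigma_a(T)\subset\R$ by an asymptotic-orthogonality argument with the two sequences $x_n$ and $y_n=Sx_n/\|Sx_n\|$ (using hyponormality of the translates twice, at $x_n$ and at $y_n$ -- your justification of translation-invariance via cancellation of cross-terms on $D(T)\subset D(T^*)$ is exactly what is needed), and then treat $\sigma(T)\setminus\sigma_a(T)$ separately: bounded below plus closed range plus non-surjectivity gives $\ker(T^*-\overline{\lambda})\neq\{0\}$, and the relation $ST^*\subset TS$ transports such an eigenvector of $T^*$ to a genuine eigenvector of $T$, landing you back in $\sigma_a(T)\subset\R$. Finally, for the endgame the paper appeals to the quasi-similarity result of \^{O}ta--Schm\"{u}dgen (Proposition \ref{Ota-schmud-prosoition-quasi-similar-symmetric-self-adj}), whereas you use the elementary von Neumann criterion: since $\pm i\in\rho(T)$, the closed symmetric $T$ has $\ran(T\pm i)=H$ and is therefore self-adjoint. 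Your route costs a slightly longer spectral argument (the paper's Claim 1 disposes of the whole spectrum of $T^*$ at once), but it buys independence from the quasi-similarity machinery, making the conclusion step more self-contained; both proofs still rest on Janas's numerical-range inclusion.
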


\begin{rema}
We did add an extra condition (namely closedness) on $T$ as regards
to Theorem \ref{sheth-Williams-bounded-similarity}. This is fine for
closed operators are considered as the natural substitutes of the
bounded ones. Besides, if $T$ is not closed, then it cannot be
self-adjoint.
\end{rema}

Now, we prove Theorem \ref{Unbounded hyponormal Main theorem!!}.

\begin{proof}
The proof is divided into three claims:
\begin{enumerate}
  \item \textbf{Claim 1:} $\sigma_a(T^*)=\sigma(T^*)$. By
  definition, $\sigma_a(T^*)\subset \sigma(T^*)$. To show the
  reverse inclusion, let $\lambda\not\in \sigma_a(T^*)$. Then there
  exists a positive number $k$ such that
  \[\|T^*x-\lambda x\|\geq k\|x\| \text{ for all } x\in D(T^*).\]
  Hence $T^*-\lambda I$ is clearly injective. Besides, and by Lemma \ref{closed range closed closed oeprator like invertibility inequality}, $\ran(T^*-\lambda I)$ is
  closed as $T^*-\lambda I$ is closed for $T^*$ is so. Now, since $T$ is hyponormal, so is $T-\overline{\lambda} I$
  which means that
  \[\|Tx-\overline{\lambda}x\|\geq \|T^*x-\lambda x\|\geq k\|x\| \text{ for all } x\in D(T)\subset D(T^*).\]
  Whence $T-\overline{\lambda}I$ is also one-to-one so that
  \[\ran(T^*-\lambda I)^{\perp}=\ker(T-\overline{\lambda}I)=\{0\} \text{ or } \overline{\ran(T^*-\lambda I)}=H.\]
  Thus $T^*-\lambda I$ is onto since we already observed that its range was
  closed. Therefore, $\lambda\not\in \sigma(T^*)$.
  \item \textbf{Claim 2:} $\sigma(T)\subset\R$. Let
  $\lambda\in\sigma(T^*)=\sigma_a(T^*)$. Then for some $x_n\in
  D(T^*)$ such that $\|x_n\|=1$ we have $\|T^*x_n-\lambda x_n\|\rightarrow 0$ as
  $n\rightarrow\infty$. Since $ST^*\subset
TS$ and $x_n\in D(T^*)$, we have $ST^*x_n=TSx_n$ so that we may
write the following
  \begin{align*}
  0\leq |(\overline{\lambda}-\lambda)<Sx_n,x_n>|=&|<(ST^*S^{-1}-\lambda+\overline{\lambda}-T)Sx_n,x_n>|\\
  \leq&|<(S(T^*-\lambda I)x_n,x_n>|+|<(\overline{\lambda}-T)Sx_n,x_n>|\\
  \leq & \|S\|~\|T^*x_n-\lambda x_n\|+|<Sx_n,(\lambda-T^*)x_n>|\\
  \leq &\|S\|~\|T^*x_n-\lambda x_n\|+\|S\|~\|T^*x_n-\lambda x_n\|\\
  =& 2\|S\|~\|T^*x_n-\lambda x_n\|\rightarrow 0.
  \end{align*}
  (where in the second inequality we used the fact that $x_n\in D(T^*)$ and $Sx_n\in
  D(T)$ both coming from $ST^*\subset TS$).
  However, the condition $0\not\in \overline{W(S)}$ forces us to
  have $\lambda=\overline{\lambda}$. Accordingly,
  $\sigma(T^*)\subset\R$ or just $\sigma(T)\subset\R$ (remember that
  $\sigma(T^*)=\{\overline{\lambda}:~\lambda\in\sigma(T)\}$).
  \item \textbf{Claim 3:} $T=T^*$. Since
  $\sigma(T)\subset\R$, Proposition
  \ref{Janas-hyponormal-III-PROPOSITION} implies that
  $W(T)\subset\R$, which clearly implies that $<Tx,x>\in\R$ for all $x\in D(T)$, which, in its turn, means that $T$ is symmetric.
  Hence $T$ is quasi-similar to $T^*$ via $S$ and $I$,  so that
  Proposition
  \ref{Ota-schmud-prosoition-quasi-similar-symmetric-self-adj}
  applies and gives the self-adjointness of $T$.
  This completes the proof.
  \end{enumerate}
\end{proof}

The condition $ST^*\subset TS$ in the foregoing theorem is not
purely conventional, i.e. we may not obtain the desired result by
 merely assuming instead that $ST\subset T^*S$, even with a slightly
stronger condition (i.e. symmetricity in lieu of hyponormality).
This is seen in the following proposition

\begin{pro}
There exist a bounded operator $S$ such that $0\not\in
\overline{W(S)}$, and an unbounded and closed hyponormal $T$ such
that $ST\subset T^*S$ whereas $T\neq T^*$.
\end{pro}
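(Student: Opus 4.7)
The plan is to exploit the asymmetry of the hypothesis $ST \subset T^*S$ (as opposed to $ST^* \subset TS$ in the main theorem) by taking $S$ to be as trivial as possible, namely $S = I$. Then $ST = T$ and $T^*S = T^*$, so $ST \subset T^*S$ is nothing but the symmetricity condition $T \subset T^*$. Since every symmetric operator $T$ satisfies $D(T) \subset D(T^*)$ and $T^*x = Tx$ for $x \in D(T)$, one has $\|T^*x\| = \|Tx\|$ on $D(T)$, so $T$ is in particular hyponormal in the unbounded sense used throughout the paper. Moreover $W(I) = \{1\}$, so trivially $0 \notin \overline{W(I)}$.

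It therefore suffices to produce a closed symmetric operator which fails to be self-adjoint. For this I would take the classical momentum-type example $T = -i\dfrac{d}{dx}$ on $H = L^2([0,1])$, with
\[
D(T) = \{f \in H^{1}([0,1]) : f(0) = f(1) = 0\}.
\]
A standard computation (integration by parts) shows that $T$ is closed and symmetric and that its adjoint is given by
\[
T^* = -i\tfrac{d}{dx}, \qquad D(T^*) = H^{1}([0,1]),
\]
which strictly contains $D(T)$. In particular $T \neq T^*$, while $T$ has equal deficiency indices $(1,1)$ (so the example is sharp).

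Putting the two observations together, with this choice of $T$ and $S = I$ we obtain $ST = T \subset T^* = T^*S$, $T$ is closed, symmetric (hence hyponormal), $0 \notin \overline{W(S)}$, and yet $T \neq T^*$, proving the proposition.

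The only place where one could imagine a difficulty is in verifying that ``symmetric implies hyponormal'' under the conventions of the paper, but this is immediate from $T \subset T^*$ yielding $\|Tx\| = \|T^*x\|$ on $D(T)$. Thus there is really no substantive obstacle: the main point of the proposition is the conceptual one that $S = I$ already defeats the analogue of Theorem \ref{Unbounded hyponormal Main theorem!!} with $ST \subset T^*S$ in place of $ST^* \subset TS$, since the former condition is so weak as to be equivalent to symmetricity of $T$ itself.
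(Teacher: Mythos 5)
Your proposal is correct and is essentially identical to the paper's own proof: take $S=I$ and $T$ any densely defined, closed, symmetric operator that is not self-adjoint, noting that symmetric implies hyponormal and that $ST\subset T^*S$ reduces to $T\subset T^*$. The only difference is that you exhibit an explicit such $T$ (the operator $-i\,d/dx$ on $L^2([0,1])$ with Dirichlet boundary conditions), which the paper deliberately leaves abstract, pointing instead to a textbook exercise.
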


\begin{proof}
Consider any unbounded, densely defined, closed and symmetric
operator $T$ which is \textit{not self-adjoint}. Let $S=I$, i.e. the
identity operator on the Hilbert space. Then $S$ is bounded and
$0\not\in \overline{W(S)}$. Also, $T$ is closed and hyponormal.
Finally, it is plain that
\[T=ST\subset T^*=T^*S.\]
\end{proof}

\begin{rema}
We have not insisted on the explicitness of the example $T$ in the
previous proof. This was not too important. Besides, there are many
of them. For instance, the interested reader may just consult
Exercise 4 on page 316 of \cite{CON}.
\end{rema}

We can still obtain the self-adjointness of $T$ from $ST\subset
T^*S$ by imposing an extra condition on $T$. We have

\begin{thm}\label{Unbounded hyponormal Invertible theorem!!}
Let $S$ be a bounded operator such that $0\not\in \overline{W(S)}$.
Let $T$ be an unbounded hyponormal and invertible operator. If
$ST\subset T^*S$, then $T$ is self-adjoint.
\end{thm}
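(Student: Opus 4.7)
The plan is to reduce to the bounded Sheth--Williams theorem (Theorem \ref{sheth-Williams-bounded-similarity}) by passing to $T^{-1}$, which lies in $B(H)$ by the invertibility hypothesis. This requires two ingredients: first, turn the intertwining $ST \subset T^*S$ into a genuine bounded identity involving $T^{-1}$; second, verify that $T^{-1}$ itself is (bounded) hyponormal.

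The bounded identity is the easy part. For any $y \in H$, $T^{-1}y \in D(T)$, so applying $ST \subset T^*S$ at $x = T^{-1}y$ yields $Sy = ST(T^{-1}y) = T^*(ST^{-1}y)$, and in particular $ST^{-1}y \in D(T^*)$. Thus $T^*ST^{-1} = S$ on all of $H$; since $T^*$ is invertible with bounded inverse $(T^*)^{-1} = (T^{-1})^*$, this rearranges to the bounded identity
\[ST^{-1} = (T^{-1})^*S.\]

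For the hyponormality of $T^{-1}$, I observe that for every $x \in H$, $T^{-1}x \in D(T) \subset D(T^*)$, so hyponormality of $T$ gives $\|T^*T^{-1}x\| \leq \|TT^{-1}x\| = \|x\|$. Thus $T^*T^{-1}$ is a contraction in $B(H)$, and so is $(T^*T^{-1})^*$. Since the bounded operator $(T^*T^{-1})^*$ contains $(T^{-1})^*T = (T^*)^{-1}T$ on $D(T)$, the inequality $\|(T^*)^{-1}Ty\| \leq \|y\|$ holds for every $y \in D(T)$. Setting $y = T^{-1}x$ gives $\|(T^{-1})^*x\| \leq \|T^{-1}x\|$ for all $x \in H$, which is precisely the bounded hyponormality of $T^{-1}$.

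With both ingredients in hand, Theorem \ref{sheth-Williams-bounded-similarity} applies to the bounded hyponormal operator $T^{-1}$ and to the intertwiner $S$ (for which $0 \notin \overline{W(S)}$), yielding $T^{-1} = (T^{-1})^*$ and therefore $T = T^*$. I expect the hyponormality step to be the most delicate: the defining inequality for $T$ lives only on $D(T)$, whereas the one for $T^{-1}$ must hold on all of $H$, and bridging the two requires the adjoint manipulation above. Everything else is routine bookkeeping with the identities $(T^{-1})^* = (T^*)^{-1}$ and $T^{**} = T$.
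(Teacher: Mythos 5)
Your proof is correct and follows essentially the same route as the paper's: pass to the bounded identity $ST^{-1}=(T^{-1})^*S$, check that $T^{-1}$ is a bounded hyponormal operator, invoke Theorem \ref{sheth-Williams-bounded-similarity} to get $T^{-1}=(T^{-1})^*$, and then conclude $T=T^*$. The only difference is that where the paper simply cites Janas for the hyponormality of $T^{-1}$, you prove it directly with the adjoint/contraction argument, which is correct (it tacitly uses that $T$, having an everywhere defined bounded inverse, is closed, so that $T^{**}=T$).
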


\begin{proof}
Since $T$ is invertible with an everywhere defined bounded inverse,
we have
\[ST\subset T^*S\Longrightarrow ST^{-1}=(T^{-1})^*S.\]
Since $T$ is hyponormal, the bounded $T^{-1}$ too is hyponormal (see
\cite{Janas-Hyponormal-unbd-I}). Hence by
\cite{Sheth-PAMS-hyponormal}, $T^{-1}$ is self-adjoint. Hence
\[T^{-1}=(T^{-1})^*\Longrightarrow T(T^{-1})^*=I\Longrightarrow T^*\subset T.\]
Now, since $T$ is hyponormal, $D(T)\subset D(T^*)$ so that finally
we have $T=T^*$, that is, $T$ is self-adjoint.
\end{proof}

The condition of invertibility in the foregoing theorem may not
simply be dispensed with. This is illustrated by the following
example:

\begin{exa}
Let $A$ be an unbounded operator defined on a Hilbert space $H$,
with domain $D(A)\subsetneqq H$. Set $T=A-A$, then $T$ is not closed
and hence it is surely not self-adjoint. Finally, let $S=I$ the
identity operator on $H$. Now we verify that the remaining
conditions (except for invertibility) of the theorem are fulfilled.
\begin{enumerate}
  \item $T$ is hyponormal for $T^*=0$ with $D(T^*)=H\supset D(A)=D(T)$ so that
  \[\|T^*x\|=\|Tx\|=0 \text{ for all } x\in D(T).\]
  \item Since $S=I$, obviously $0\not\in\overline{W(S)}$. Moreover,
  \[T=0_{D(A)}\subset T^*=0_{H} \text{ so that } ST\subset T^*S.\]
\end{enumerate}
\end{exa}

Last but not least, we have a very nice and important result which
generalizes Theorem \ref{berberian-cramped-unitary
equivalence-self-adjointness} to unbounded operators.

\begin{thm}\label{Berberian-Dehimi-Mortad-unbounded!}
Let $U$ be a cramped unitary operator. Let $T$ be a closed operator
such that $UT=T^*U$. Then $T$ is self-adjoint.
\end{thm}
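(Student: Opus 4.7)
The plan is to adapt Berberian's bounded-case argument (Theorem~\ref{berberian-cramped-unitary equivalence-self-adjointness}) to the unbounded setting with careful domain tracking. The strategy is: first show that $U^{2}$ commutes with $T$; second, use crampedness to upgrade this to a commutation of $U$ itself with $T$; third, combine with the hypothesis to read off $T=T^{*}$.

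First I would take the adjoint of $UT=T^{*}U$. Using $(UT)^{*}=T^{*}U^{*}$ (since $U$ is bounded and everywhere defined), together with Lemma~\ref{(AB)*=B*A*} applied to the product $T^{*}U$ (since $U$ is invertible with bounded inverse) and $T^{**}=T$ (since $T$ is closed and densely defined), this yields $T^{*}U^{*}=U^{*}T$. Equivalently, the two operator identities read $UTU^{*}=T^{*}$ and $UT^{*}U^{*}=T$. Composing them and matching domains produces $U^{2}TU^{*2}=T$, i.e.\ $U^{2}T=TU^{2}$ as operators (and in particular $U^{2}D(T)=D(T)$); the conjugate relation $U^{*2}T=TU^{*2}$ is then automatic.

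The key use of crampedness comes next. Since $\sigma(U)$ lies in an open arc of the unit circle of length strictly less than $\pi$, $\sigma(U^{2})$ lies in an arc of length strictly less than $2\pi$ and hence omits at least one point of the unit circle. A continuous branch of $w\mapsto\sqrt{w}$ singling out $U$ is therefore defined on $\sigma(U^{2})$, and the continuous functional calculus for the normal operator $U^{2}$ places $U$ inside the norm-closed $*$-algebra generated by $U^{2}$ and $I$. Consequently there exist polynomials $p_n$ in two commuting variables with $p_n(U^{2},U^{*2})\to U$ in operator norm. For $x\in D(T)$, the invariance of $D(T)$ under both $U^{2}$ and $U^{*2}$ forces $y_n:=p_n(U^{2},U^{*2})x\in D(T)$ with $Ty_n=p_n(U^{2},U^{*2})Tx$; since $y_n\to Ux$ and $Ty_n\to UTx$ in norm, the closedness of $T$ delivers $Ux\in D(T)$ and $TUx=UTx$. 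Running the identical argument with the (also cramped) $U^{*}$ gives $U^{*}D(T)\subseteq D(T)$, whence $UD(T)=D(T)$ and $UT=TU$ as full operator equations.

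Finally, combining $UT=TU$ with the hypothesis $UT=T^{*}U$ gives $TU=T^{*}U$; since $U$ is invertible and $UD(T)=D(T)=D(T^{*})$ (the latter equality dropping out of the original relation), one reads off $T=T^{*}$. The main obstacle is precisely the approximation step: the convergence $p_n(U^{2},U^{*2})\to U$ holds only in operator norm, and the closedness hypothesis on $T$ is exactly what is needed to push this limit through the unbounded $T$ and land back in $D(T)$. Everything else is bookkeeping of domains under the involution and under products of a bounded operator with an unbounded one.
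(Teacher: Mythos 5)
Your argument is correct, but it takes a genuinely different route from the one in the paper. Both proofs start identically: taking adjoints of $UT=T^*U$ via Lemma~\ref{(AB)*=B*A*} and $T^{**}=T$ to get $T^*U^*=U^*T$, and hence $U^2T=TU^2$ with $U^2D(T)=D(T)$. From there the paper does \emph{not} try to commute $U$ itself past $T$; instead it derives $TU=UT^*$ and $TU^*=U^*T^*$, sets $S=\tfrac12(U+U^*)$, which is strictly positive precisely because $U$ is cramped (following Williams), proves the intertwining relation $STT^*\subset T^*TS$, and then invokes two external results: Stochel's asymmetric Putnam--Fuglede theorem (Corollary 5.1 of \cite{STO}) to conclude $TT^*=T^*T$, hence normality of the closed operator $T$, and then Corollary 3 of \cite{Mortad-Tsukuba-2010} to pass from $UT=T^*U$ with $0\notin\overline{W(U)}$ and $T$ normal to $T=T^*$. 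You instead exploit crampedness spectrally: since $\sigma(U)$ lies in an open semicircle, squaring is injective there, so $U=f(U^2)$ for a continuous branch $f$ of the square root (by the composition rule of the continuous functional calculus), hence $U$ is a norm limit of polynomials in $U^2,U^{*2}$; the closedness of $T$ then upgrades the $U^2$-commutation to $UT=TU$ with $UD(T)=D(T)$, and self-adjointness follows by elementary domain bookkeeping, using that the hypothesis is a genuine operator \emph{equality}, so $UD(T)=D(T^*)$. Your proof is more self-contained (no Fuglede--Putnam-type theorem, no normality detour, no appeal to the similarity result for unbounded normal operators) and yields the stronger intermediate conclusion that $U$ actually commutes with $T$; the paper's proof avoids the functional-calculus approximation step at the price of citing heavier machinery. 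Two small points you should spell out in a careful writeup: the branch of $\sqrt{w}$ must be chosen as the inverse of squaring on the arc containing $\sigma(U)$ so that the composition rule really gives $f(U^2)=U$ (this is where crampedness is indispensable, since for a general unitary $U\notin C^*(U^2,I)$), and the final identification $D(T)=D(T^*)$ should be stated as coming from comparing $UD(T)=D(T^*)$ (from the hypothesis read as an equality of domains) with $UD(T)=D(T)$ (from your commutation step) --- which you do indicate parenthetically.
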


\begin{proof}
First we prove that $U^2T=TU^2$. Since $U$ is bounded and
invertible, we have
\[(UT)^*=T^*U^* \text{ and } (T^*U)^*=U^*T^{**}=U^*\overline{T}=U^*T\]
(by Lemma \ref{(AB)*=B*A*}). Hence $T^*U^*=U^*T$. We may then write

\begin{align*}
U^2T{U^{*}}^2=&U(UTU^*)U^*\\
=&UT^*U^*\\
=&UU^*T\\
=&T,
\end{align*}
giving $U^2T=TU^2$ or $T{U^{*}}^2={U^{*}}^2T$ or $U^2T^*=T^*U^2$.

Next, we prove that $TU=UT^*$. We have
\begin{align*}
TU=&U^*T^*UU\\
=&U^*T^*U^2\\
=&U^*UUT^*\\
=&UT^*.
\end{align*}
Hence also $U^*T^*=TU^*$.

The penultimate step in the proof is to prove that $T$ is normal. To
this end, set $S=\frac{1}{2}(U+U^*)$. Following \cite{Wil}, $S>0$.

Then we show that $STT^*\subset T^*TS$. We have
\[UTT^*=T^*UT^*=T^*TU\]
and
\[U^*TT^*=T^*U^*T^*=T^*TU^*.\]
Hence
\begin{align*}
STT^*&=\frac{1}{2}(U+U^*)TT^*\\
&=\frac{1}{2}UTT^*+\frac{1}{2}U^*TT^* \text{ (as $U$ is bounded)}\\
&=\frac{1}{2}T^*TU+\frac{1}{2}T^*TU^*\\
&\subset T^*TS.
\end{align*}
So according to Corollary 5.1 in \cite{STO}, $TT^*=T^*T$, and
remembering that $T$ is taken to be closed, we immediately conclude
that $T$ is normal. Accordingly, and by Corollary 3 in
\cite{Mortad-Tsukuba-2010},
\[UT=T^*U\Longrightarrow T=T^*\]
as $0\not\in \overline{W(U)}$, establishing the result.
\end{proof}

\begin{rema}
Evidently, a hypothesis like $UT\subset T^*U$ would not yield the
desired result. For example, take $T$ to be any symmetric and closed
unbounded operator $T$ which is not self-adjoint. Let $U=I$ be the
identity operator on the given Hilbert space. Then clearly
$UT\subset T^*U$ while $T\neq T^*$.

The assumption $U$ being cramped is indispensable even in the
bounded case. This was already observed by Beck-Putnam
\cite{BeckPut} and McCarthy \cite{McCarthy-Beck-Putnam}.
\end{rema}

\begin{rema}
Going back to the previous proof, we observe that this proof may
well be applied to bounded operators. Hence we have just given a new
proof of Theorem \ref{berberian-cramped-unitary
equivalence-self-adjointness} which bypasses the Cayley transform.
\end{rema}

Thanks to Theorem \ref{Berberian-Dehimi-Mortad-unbounded!}, we may
prove an unbounded version of Theorem \ref{Main Theorem: Bounded
Case}. It reads

\begin{cor}\label{berberian-dehimi-mortad-corollary}
Let $S$ be a bounded operator and $T$ be an unbounded closed
operator satisfying: $S^{-1}T^*S=T$, $S^*ST=TS^*S$ and $0\not\in
\overline{W(S)}$. Then $T$ is self-adjoint.
\end{cor}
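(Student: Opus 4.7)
The plan is to imitate the proof of Theorem \ref{Main Theorem: Bounded Case} but to feed the conclusion into Theorem \ref{Berberian-Dehimi-Mortad-unbounded!} in place of Berberian's bounded theorem. As before, the hypothesis $0\notin \overline{W(S)}$ forces $S$ to be invertible (Theorem \ref{berberian-T invertible cramped}), so $S$ admits a polar decomposition $S=UP$ with $P=(S^*S)^{1/2}$ bounded, positive and invertible, and $U=S(S^*S)^{-1/2}$ a cramped unitary. Rewriting the two operator identities in terms of $U$ and $P$, the second hypothesis $S^*ST=TS^*S$ becomes $P^2T=TP^2$, while $S^{-1}T^*S=T$ is equivalent to $T^*S=ST$, i.e.\ $T^*UP=UPT$.

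The main technical point, and the only step that does not carry over verbatim from the bounded case, is to promote $P^2T=TP^2$ to $PT=TP$ as an equality of unbounded operators. Here I would invoke the following standard argument: since $P^2$ is bounded self-adjoint and $T$ is closed, the inclusion $P^2T\subset TP^2$ yields $f(P^2)T\subset Tf(P^2)$ for every $f\in C(\sigma(P^2))$, by approximating $f$ uniformly with polynomials and using that $T$ is closed. Applying this to $f(t)=\sqrt{t}$ and $g(t)=1/\sqrt{t}$ (both continuous on $\sigma(P^2)\subset[c,\infty)$ with $c>0$, since $P$ is invertible), one obtains $PT\subset TP$ and $P^{-1}T\subset TP^{-1}$. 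The equality $P^2T=TP^2$ implies that $D(T)$ is stable under $P^2$ in both directions, and the second inclusion $P^{-1}T\subset TP^{-1}$ gives the reverse inclusion $TP\subset PT$, so that finally $PT=TP$ on the common domain $D(T)$.

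Once $PT=TP$ is established, I would substitute it into $T^*UP=UPT=UTP$ and cancel $P$ on the right. Since $P$ is a bounded bijection with bounded inverse, this cancellation is harmless at the level of domains: for $x\in D(T)$ one has $T^*U(Px)=UT(Px)$, and as $P$ bijects $D(T)$ onto itself (already observed above), setting $y=Px$ yields $T^*Uy=UTy$ for all $y\in D(T)$, hence $UT\subset T^*U$. To upgrade this to the equality $UT=T^*U$ required by Theorem \ref{Berberian-Dehimi-Mortad-unbounded!}, one checks that $D(T^*U)=\{y:Uy\in D(T^*)\}$ coincides with $D(T)$; this is exactly the translation, under $y=Px$, of the domain part of $T^*S=ST$, namely $\{x:UPx\in D(T^*)\}=D(T)$, using once more that $P^{\pm 1}$ preserve $D(T)$.

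Finally, with $U$ a cramped unitary, $T$ closed and $UT=T^*U$, Theorem \ref{Berberian-Dehimi-Mortad-unbounded!} delivers $T=T^*$, completing the proof. The only real obstacle is the second paragraph (turning $P^2T=TP^2$ into $PT=TP$), which is a matter of knowing that continuous functional calculus of a bounded self-adjoint operator respects closed-operator commutation; everything else is the bounded argument transcribed with due bookkeeping of domains.
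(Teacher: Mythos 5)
Your proposal is correct and follows essentially the same route as the paper, whose proof of the corollary is literally ``the same proof as the bounded case, mutatis mutandis'': polar decomposition $S=UP$ with $U$ cramped, upgrading $P^2T=TP^2$ to $PT=TP$, cancelling $P$ to get $UT=T^*U$, and invoking Theorem \ref{Berberian-Dehimi-Mortad-unbounded!}. The only difference is that you make explicit the domain bookkeeping (closedness of $T$ plus functional calculus to commute $P^{\pm1}$ with $T$, and the identification $D(T^*U)=D(T)$) that the paper leaves implicit, and those details check out.
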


\begin{proof}The same proof as that of Theorem \ref{Main Theorem: Bounded
Case}, mutatis mutandis.
\end{proof}

\bibliographystyle{amsplain}

\end{document}